\newtheorem{theorem}{Theorem}[section]
\newtheorem{corol}[theorem]{Corollary}
\newtheorem{remark}[theorem]{Remark}
\def\cocoa{\mbox{\rm
   C\kern-.13em o\kern-.07 em C\kern-.13em o\kern-.15em A\ }}
\title{\bf Geometric Phase Integrals and
 Irrationality Tests}
\author{Domenico Napoletani \thanks{Institute for Quantum Studies, Chapman University, Orange, CA, 92866. Email:napoleta@chapman.edu}, and Daniele C. Struppa\thanks{Schmid College of Science and Technology, Chapman University, Orange, CA 92866. Email:
struppa@chapman.edu}}
\date{}
\begin{document}
\maketitle
\onehalfspacing
\begin{abstract}
Let $F(x)$ be an analytical, real valued function defined on a compact domain $\mathcal {B}\subset\mathbb{R}$. We prove that the problem of establishing the irrationality of $F(x)$ evaluated at $x_0\in \mathcal{B}$ can be stated with respect to the convergence of the phase of a suitable integral $I(h)$, defined on an open, bounded domain, for $h$ that goes to infinity. This is derived as a consequence of a similar equivalence, that establishes the existence of isolated solutions of systems equations of analytical functions on compact real domains in $\mathbb{R}^p$, if and only if the phase of a suitable ``geometric'' complex phase integral $I(h)$ converges for $h\rightarrow \infty$. We finally highlight how the method can be easily adapted to be relevant for the study of the existence of rational or integer points on curves in bounded domains, and we sketch some potential theoretical developments of the method.

\end{abstract}
\newpage
\noindent

\section{Real Geometry and Irrationality}

Real geometry, and especially real algebraic geometry, has developed relatively late its own techniques \cite{RAG}, perhaps due to the great success of algebraic geometry theories over complex fields. However, this has led to a lack of tight bounds on the structure of solutions of systems of equations over the real numbers, and, even with the extensive recent development of real algebraic geometry and its relations to the theory of computation ( see for example \cite{smale}), a general tool that can encompass problems on a very large class of functions is lacking.

In this paper we show that the existence of isolated solutions of a system of analytical equations ${\bf F}(x)=0$ over a compact $\mathcal B$ in $\mathbb{R}^p$ is equivalent, under suitable conditions, to the existence of the limiting phase of a complex phase integral $I_{\bf F}(h)$ for $h$ that goes to infinity.  This result allows a plethora of analytical techniques for the asymptotic and non-perturbative study of complex phase integrals to become relevant for real geometry, providing, in some sense, an indirect, but tightly tailored complexification of real geometry.

We then approach another, apparently unrelated question: establishing the irrationality of special numbers. Geometry has always been deeply intertwined with the problem of establishing the irrationality of numbers, but, while the more specific problem of establishing linear and algebraic independence of several point evaluations of special functions has a rich modern history \cite{baker}, establishing directly the irrationality of series, and of pointwise evaluation of general functions, has been, mostly, the domain of ad-hoc methods (see for example \cite{aperyNew, erdos}).

In Section 3 we suggest that, for real irrational numbers, another viewpoint is available, that transforms the problem of the irrationality of $F(x_0)=\alpha_0$ into the geometric problem of finding zeros of a systems of equations on a four dimensional open, bounded domain. This problem is then phrased in terms of the phase integral method we developed for real geometry in Section 2, offering a new perspective on some old problems. We conclude the paper by suggesting how the main results of Section 3 can be adapted to diophantine geometry.

\section{Geometric Phase integrals}

Given an analytical vectorial function ${\bf F}(x)=[F_1(x),...,F_p(x)]$ defined on a compact set $\mathcal{B}\subset\mathbb{R}^p$, consider the associated norm function
$L(x)=\sum_{i=1}^p F_i(x)^2$.  Then $L(x)=0$ clearly implies $F_i(x)=0$ for all $i$. Moreover, every point such that $L(x)=0$ is a critical point of $L(x)$, since $\frac{\partial L(x)}{\partial x_i}=\sum_{j=1}^p 2F_j(x)\frac{\partial F_j(x)}{\partial x_i}$ and setting $F_j(x)=0$ for all $j$'s gives $\frac{\partial L(x)}{\partial x_i}=0$. The relations between critical points of $L(x)$ and solutions of the system of equations ${\bf F}(x)=0$ can be made more compelling, by building a suitable phase integral whose asymptotic behavior depends on the existence of solutions to the system itself. Indeed the following theorem holds:
\begin{theorem}
\label{compactCase}
Let ${\bf F}(x)=[F_1(x),...,F_p(x)]$ be an analytical, vectorial function defined on a compact domain $\mathcal{B}\subset \mathbb{R}^p$, and let $L(x)=\sum_{i=1}^p F_i(x)^2$ have only isolated critical points in $\mathcal{B}$. Consider the integral
\begin{equation}
\label{PI1}
I(h)=\int_{\mathcal A} \int_{\mathcal B} e^{ihL(x)y^2} dx dy,\,\,\, y\in \mathcal{A} \subset \mathbb{R},\,\,0\not \in \mathcal{A},\,\, x\in \mathcal{B} \subset \mathbb{R}^p,
\end{equation}
and denote by $\phi(I(h))$ the phase of $I(h)$, then the system ${\bf F}(x)=0$ has a solution in $\mathcal{B}$ if and only if the phase $\phi(I(h))$ has a limit for $h$ going to infinity.
\end{theorem}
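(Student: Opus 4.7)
The plan is to analyze $I(h)$ by the method of stationary phase applied to the oscillatory integral with phase $\psi(x,y) = L(x)\,y^{2}$ on $\mathcal{B}\times\mathcal{A}$. The interior critical points of $\psi$ satisfy $y^{2}\nabla L(x)=0$ and $2yL(x)=0$; since $0\notin\mathcal{A}$ forces $y\neq 0$, these conditions are equivalent to $L(x)=0$, which, as observed before the theorem, is in turn equivalent to $\mathbf{F}(x)=0$ and already implies $\nabla L(x)=0$. Because $L$ has only isolated critical points on the compact set $\mathcal{B}$, its zero set is a finite (possibly empty) collection $\{x_{1},\dots,x_{N}\}$, so the interior critical set of $\psi$ is the finite union of arcs $\{x_{k}\}\times\mathcal{A}$. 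This explains the relevance of the coupled phase $y^{2}L(x)$: it converts each solution of $\mathbf{F}=0$ into a one-dimensional critical manifold of the oscillatory integral on which $\psi$ actually vanishes.

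For the direction ``a solution exists $\Rightarrow \phi(I(h))$ has a limit'', I would perform the $x$-integration first, treating $hy^{2}$ as the large parameter. Near each zero $x_{k}$ of $L$ the Hessian equals $H_{L}(x_{k}) = 2(D\mathbf{F}(x_{k}))^{T}D\mathbf{F}(x_{k})$, positive semidefinite; in the generic non-degenerate case the stationary-phase contribution to $\int_{\mathcal{B}} e^{ihy^{2}L(x)}\,dx$ is asymptotic to $(hy^{2})^{-p/2}\,c_{k}\,e^{i\pi p/4}$ with $c_{k}$ a positive real constant. Inserting this into the outer integral over $y$ produces a leading term of the form $h^{-p/2}\,C\,e^{i\pi p/4}$ whose argument is the $h$-independent constant $\pi p/4$. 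The remaining contributions come from non-zero interior critical points of $L$ and from the boundary of $\mathcal{B}\times\mathcal{A}$; each carries an oscillatory factor $e^{ihy^{2}L^{*}}$ with $L^{*}>0$, which has no stationary point in $y$ on $\mathcal{A}$, so repeated integration by parts in $y$ shows it to be $O(h^{-p/2-1})$ or smaller. Hence $\phi(I(h))$ converges. A degenerate (non-Morse) zero would be handled by a local resolution of singularities, yielding a leading asymptotic $h^{-\alpha}(\log h)^{\beta}$ with a still $h$-independent argument, which again gives phase convergence.

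For the converse, suppose $\mathbf{F}(x)=0$ has no solution in $\mathcal{B}$; then $L(x)\ge c>0$ throughout, $\psi$ has no critical point on which the phase vanishes, and $I(h)$ can be expanded as a finite sum $\sum_{j} A_{j}(h)\,e^{ih\Psi_{j}}$ indexed by the non-zero interior critical points of $L$ together with the boundary strata of $\mathcal{B}\times\mathcal{A}$, each with a strictly positive phase weight $\Psi_{j}$ and an amplitude $A_{j}(h)$ of polynomial decay. Since every unit-modulus factor $e^{ih\Psi_{j}}$ rotates non-trivially with $h$, the argument of their weighted sum should not converge, and the expected conclusion is that $\phi(I(h))$ has no limit. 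The main obstacle I anticipate is ruling out exceptional cancellations among these oscillatory contributions that could, a priori, produce a spurious limit phase; overcoming this requires careful, term-by-term control of the stationary-phase expansion and of every boundary stratum, and this, rather than the stationary-phase machinery itself, is where the delicate part of the proof will have to live.
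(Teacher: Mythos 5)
Your proposal follows essentially the same route as the paper's own proof: stationary phase in $x$ with $hy^{2}$ as the large parameter, separation of the critical points of $L$ into those with $L=0$ (contributing an $h^{-p/2}$ term with $h$-independent phase) and those with $L\neq 0$ (killed to order $h^{-p/2-1}$ by the absence of a stationary point of $y\mapsto L(x_{j})y^{2}$ on $\mathcal{A}$), plus a boundary/vertex analysis when no critical points exist. The one point you flag as unresolved --- ruling out accidental cancellations among the residual oscillatory terms in the converse direction --- is treated only informally in the paper as well (it argues from a single dominant term $e^{ihL(x_{j})y^{2}}$ whose phase keeps rotating), so your proposal matches the published argument in both substance and level of rigor.
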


\begin{proof}
The integration in $x$ in the integral in Eq. \ref{PI1} can be written, for $h\rightarrow \infty$, with respect to the critical points of $L(x)$ in $\mathcal B$, using standard stationary phase approximation methods \cite{wang,erd}, in this paper we will refer mostly to \cite{wang} for the necessary background material.

We can consider separately the critical points such that $L(x)=0$, and those for which $L(x)\neq 0$,and we have:
\begin{equation}
\label{asymp1}
\begin{split}
\lim_{h\rightarrow \infty} I(h)=\int_{y\in \mathcal{A}} \sum_{L(x_i)=0} (\frac{2\pi}{h})^{\frac{p}{2}} \frac{1}{y^p (\det H(x_i))^{1/2}}e^{i\frac{\pi}{4}\sigma_i} dy+ \\
\int_{y\in \mathcal{A}} \sum_{L(x_j)\neq 0} (\frac{2\pi}{h})^{\frac{p}{2}} \frac{1}{y^p (\det H(x_j))^{1/2}}e^{ihL(x_j)y^2+i\frac{\pi}{4}\sigma_j} dy
\end{split}
\end{equation}
\noindent
with $H(x_*)$ the Hessian matrix of $L(x)$ evaluated at $x_*$, and $\sigma_*$ the signature of $H(x_*)$.

We are assuming here that there is at least one critical point with Hessian different from zero, very similar arguments to those we present here can be deduced without this restriction, at the cost of a more complicated argument that depends on higher derivatives of $L$ (that exist, since $L$ is analytical in $\mathcal B$). The restricted (but generic) setting with at least one critical point with Hessian different from zero is sufficient to prove our main result in Section 3.

Since the function $L$ is analytical, so is the system of equations whose solution defines critical points, and if we assume all such solutions are isolated, they are in finite number over a compact set (see for example \cite{kranz}, page 180). And we can further simplify the representation of $I(h)$ and in particular, in the limit of large $h$,
\begin{equation}
\begin{split}
\lim_{h\rightarrow \infty}I_1(h)=\lim_{h\rightarrow \infty}\int_{y\in \mathcal{A}} \sum_{L(x_i)=0} (\frac{2\pi}{h})^{\frac{p}{2}} \frac{1}{y^p (\det H(x_i))^{1/2}}e^{i\frac{\pi}{4}\sigma_i} dy= \\
\lim_{h\rightarrow \infty}\sum_{L(x_i)=0} (\frac{2\pi}{h})^{\frac{p}{2}} \frac{1}{(\det H(x_i))^{1/2}}e^{i\frac{\pi}{4}\sigma_i}S
\end{split}
\end{equation}
\noindent
where $S=\int_{\mathcal{A}}\frac{1}{y^p}dy$, and, being the last term a finite sum, and factoring out $\frac{1}{h^{p/2}}$, the phase of $I_1(h)$ is shown to be independent of $h$ and dependent only on the critical points $x_i$'s, or more exactly, on the signatures $\sigma_i$.

Let us now analyze the second portion of the sum in the right hand side of Eq. \ref{asymp1}:
\begin{equation}
I_2(h)=\int_{y\in \mathcal{A}} \sum_{L(x_j)\neq 0} (\frac{2\pi}{h})^{\frac{p}{2}} \frac{1}{y^p (\det H(x_j))^{1/2}}e^{ihL(x_j)y^2+i\frac{\pi}{4}\sigma_j} dy
\end{equation}

We note first of all that each integral
\begin{equation}
\int_{y\in \mathcal{A}}  (\frac{2\pi}{h})^{\frac{p}{2}} \frac{1}{y^p (\det H(x_j))^{1/2}}e^{ihL(x_j)y^2+i\frac{\pi}{4}\sigma_j} dy
\end{equation}
can be written as
\begin{equation}
(\frac{2\pi}{h})^{\frac{p}{2}} \frac{1}{(\det H(x_j))^{1/2}}e^{i\frac{\pi}{4}\sigma_j}\int_{y\in \mathcal{A}}\frac{1}{y^p} e^{ihL(x_j)y^2} dy
\end{equation}
and it is therefore a phase integral in $y$ computed over an interval that does not include a critical point ($y=0$). Such integral decreases at least like $O(\frac{1}{hL(x_j)})$, the leading contribution from the boundary points of $\mathcal{A}$ (\cite{wang}, page 488; \cite{erd}, page 52). And therefore
\begin{equation}
\lim_{h\rightarrow \infty}I_2(h)=\lim_{h\rightarrow \infty} (2\pi)^{\frac{p}{2}}\sum_{L(x_j)\neq 0}  \frac{1}{(\det H(x_j))^{1/2}}e^{i\frac{\pi}{4}\sigma_j}O(\frac{1}{h^{p/2+1}L(x_j)})
\end{equation}

Recall we are in the generic case where there is at least one point $x_j$ with $\det H(x_j)\neq 0$, and we know there are finitely many critical points, and therefore also finitely many critical points for which $L(x_j)\neq 0$. This last observation allows us to conclude that all the values $L(x_j)$ can be bounded away from $0$, and the entire sum above can be estimated as
\begin{equation}
\lim_{h\rightarrow \infty}I_2(h)= O(\frac{1}{h^{p/2+1}})
\end{equation}
This is a negligible quantity with respect to $I_1(h)\sim \frac{1}{h^{p/2}}$. We can conclude that the limit for $h\rightarrow \infty$ of $I(h)=I_1(h)+I_2(h)$ has constant phase if $L(x)=0$ for at least a specific $x_j$. If there are no values for which $L(x)=0$, the phase will not converge, this is easy to see in the case we do have at least a critical point $x_j$ with $L(x_j)\neq 0$ and $\det H(x_j)\neq 0$, since in that case the term $e^{ihL(x_j)y^2}$ in $I_2(h)$ will each continue to change phase as $h$ goes to infinity.

Note that if the critical points such that $L\neq0$ have $\det H=0$, we would need to look at higher order asymptotic terms, but, since the number of critical points is finite, we could still look at the highest order, dominant critical points, whose phase is dependent on $e^{ihL(x_j)y^2}$ (\cite{wang} page 483), and this is one of the reasons we need to have, in the most general case, $\bf F$ analytical.

Suppose instead that there are no critical points at all, then the integral in Eq. \ref{PI1} is dominated by the evaluation of some derived phase integral on the boundary of $\mathcal{A}\times \mathcal{B}$, more particularly, it is true that (adapted from \cite{wang}, page 488):
\begin{equation}
\label{wang}
I(h)\sim-\frac{i}{h}\int_{\partial{(A\times B)}}Ge^{ihL(x)y^2}da
\end{equation}
where $\partial{(\mathcal{A}\times \mathcal{B})}$ is the boundary of $\mathcal{A}\times\mathcal{B}$, $da$ is a suitable measure on the boundary, and $G$ is a multiplier function dependent on $L(x)y^2$.

Now, $\mathcal{A}\times \mathcal{B}$ is an hypercube, and a recursive application\footnote{Where successive multiplier functions $G_i$ will depend both on $L^2y^2$ and $G_{i-1}$ (see again \cite{wang} page 487-488).} of the result in Eq. \ref{wang}, to lower and lower dimensional boundaries of its hyperfaces, will reduce the asymptotic evaluation of $I(h)$ to a sum of suitable multiples of evaluations of $e^{ihL(x)y^2}$ at the vertexes of the hypercube. None of these values is independent of $h$, since  we assumed there are no critical points of $L$ on $\mathcal{A} \times \mathcal{B}$, and therefore $L(x)y^2\neq0$ everywhere. This implies that $\lim_{h\rightarrow \infty}\phi(I(h))$ does not exist when there are no critical points on $\mathcal{A}\times \mathcal{B}$.
\end{proof}

\begin{remark}
While the main thrust of this paper is the analysis of irrationality of the evaluation of analytical functions at a point, as it will be clear in the next section, we stress that the result of Theorem 2.1, in its simplicity, offers a potentially powerful new approach for problems in real geometry, and in particular for the solution of problems in real algebraic geometry. To this purpose, Theorem 2.1 would need to be suitably generalized to the case ${\bf F}(x)=0$ has solutions of dimension bigger than zero, along the lines of the results on stationary phase asymptotic approximations on curves described in \cite{wang}, page 459.
\end{remark}

Because of the property proven in Theorem 2.1 that the phase of $I(h)$ in Eq. \ref{PI1} is constant in the limit of $h$ large if and only if there is a solution for the equation ${\bf F}(x)=0$, we call the integrals in Eq. \ref{PI1} {\it geometric phase integrals}. And we call $L(x)$ the {\it geometric Lagrangian} associated to ${\bf F}(x)=0$. We use this terminology in analogy to the Lagrangian functions used in defining path and field integrals \cite{cmf}, trusting that it will be suggestive of further crossfertilization of ideas and methods. In our main setting of the study of irrationality of evaluation of functions, see for example the discussion at the end of Section 4.

\section{Irrationality Tests}

We will now apply this general setting to a more complex case that involves infinitely many critical points, but such that the relative contributions of each can be controlled.

Suppose we want to know whether $F(x_0)=\alpha_0$ is irrational. The system of equations
\begin{equation}
\label{irr}
\begin{split}
%\begin{align}
&F(x)-\alpha=0,\,\,\,x\in[x_0-\delta,\,\,x_0+\delta]&\\
&x-x_0=0,\,\,\,\,\,\,\alpha\in F([x_0-\delta, x_0+\delta])&\\
&\sin\frac{\pi}{m}=\sin\frac{\pi}{n}=0,\,\,\,\,m,n\in(0,\,1]&\\
&\alpha m-n=0&\\
%\end{align}
\end{split}
\end{equation}
has a solution if and only if $\alpha_0$ is a rational number. We can adapt the stationary phase integral analysis performed in Section 2, used to study geometric problems, to be of relevance in this case. We build to this purpose the geometric Lagrangian function:
\begin{equation}
\label{La}
\begin{split}
L(x,\alpha,m,n)=(F(x)-\alpha)^2+(x-x_0)^2+\sin^2\frac{\pi}{m}+
\sin^2\frac{\pi}{n}+(\alpha m-n)^2
\end{split}
\end{equation}
Again, $L(x,\alpha,m,n)=0$ if and only if the previous system has a zero solution, and we may ask whether the limit for $h\rightarrow \infty$ of the phase of the following integral has any relation to the rationality of $F(x_0)=\alpha_0$:
\begin{equation}
\label{PI2}
\begin{split}
I_L(h)=\int_{y\in \mathcal{A}} \int_{\omega\in \Omega_{\delta}} e^{ihL(\omega)y^2} d\omega dy,\,\,0\not \in \mathcal{A}
\end{split}
\end{equation}
where $\omega=(x,\alpha,m,n)$ and we denote by $\Omega_{\delta}$ the tensor product of the domains allowed for each of the components of $\omega$ in Eq. \ref{irr}.

The main complication, with respect to the similar setting in Section 2, is the existence of infinitely many critical points, every time there is at least one point such that $L(\omega)=0$. Consider the partial first derivatives of $L(\omega)$, a critical point of $L(\omega)$ has to satisfy:
\begin{equation}
\label{partials}
%\begin{flalign}
\begin{split}
&\frac{\partial L}{\partial x}=2(F(x)-\alpha)\frac{dF(x)}{dx}+2(x-x_0)=0&\\
&\frac{\partial L}{\partial \alpha}=-2(F(x)-\alpha)+2(\alpha m-n)=0&\\
&\frac{\partial L}{\partial m}=2\sin\frac{\pi}{m}\cos\frac{\pi}{m}(-\frac{\pi}{m^2})+2(\alpha m-n)\alpha=0&\\
&\frac{\partial L}{\partial n}=2\sin\frac{\pi}{n}\cos\frac{\pi}{n}(-\frac{\pi}{n^2})-2(\alpha m-n)=0&\\
\end{split}
%\end{flalign}
\end{equation}
We can see that if $\omega_0=(x_0,\alpha_0,m_0,n_0)$ is a solution of $L(\omega_0)=0$, then it is also a critical point of $L$. However, also $\omega_i=(x_0,\alpha_0,m_i,n_i)$ will be a zero and a critical point of $L$, where $m_i=\frac{m_0}{i}$ and $n_i=\frac{n_0}{i}$, $i$ any integer (this can be seen by simple substitution in $\alpha m-n=0$, assuming $\alpha_0m_0-n_0=0$). Note that all critical points with $L(\omega)=0$ need to have $x=x_0$ and $\alpha=\alpha_0$.

To overcome this proliferation of critical points there are two main issues to consider, the first is that our argument will work only in the limit of the domain approaching the zero for variables  $m,n$. Second, we need to control the decay of the Hessian in the asymptotic expression used to prove Theorem 2.1.

Regarding the first issue, we cut the domain of $m$ and $n$ as $m\in[M,1]$ and $n\in[N,1]$ with $0<M,N<1$ and define the domain
\begin{equation}
\Omega_{\delta}(M,N)=[x_0-\delta,\,\,x_0+\delta]\times F([x_0-\delta, x_0+\delta])\times[M,1]\times[N,1].
\end{equation}

The main conclusion of our analysis can be stated as a theorem:
\begin{theorem}
Let $F(x)$ be an analytical function in the interval $[x_0-\delta,x_0+\delta]$, with $\delta$ sufficiently small, and assume $F'(x_0)\neq 0$. Consider the following phase integral, the restriction of $I_L(h)$ to the domain $\Omega_{\delta}(M,N)$:
\begin{equation}
\begin{split}
I_L(h,M,N)=\int_{y\in \mathcal{A}} \int_{\omega\in \Omega_{\delta}(M,N)} e^{ihL(\omega)y^2} d\omega dy,\,\, 0\not \in \mathcal{A}
\end{split}
\end{equation}
where $L$ is defined in Eq. \ref{La}. Let $\phi(I_L(h,M,N))$ be the phase of $I_L(h,M,N)$.   $F(x_0)=\alpha_0$ is a rational number if and only if the following limit converges:
\begin{equation}
\lim_{M,N\rightarrow 0}\lim_{h\rightarrow \infty} \phi(I_L(h,M,N)).
\end{equation}
\end{theorem}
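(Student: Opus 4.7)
The plan is to handle the iterated limit in two stages: for each fixed $(M,N)>0$, Theorem~\ref{compactCase} applied to $I_L(h,M,N)$ reduces $\lim_{h\to\infty}\phi(I_L(h,M,N))$ to the argument of a finite sum over the zeros of $L$ inside $\Omega_\delta(M,N)$; then the outer limit $M,N\to 0$ is controlled by showing that these finite sums converge to an absolutely convergent series. First I would classify the zeros of $L$. The conditions $L=0$ force $F(x)=\alpha$, $x=x_0$, $\sin(\pi/m)=\sin(\pi/n)=0$, and $\alpha m=n$, hence $x=x_0$, $\alpha=\alpha_0$, and $m=1/k$, $n=1/j$ for positive integers $k,j$ with $\alpha_0=k/j$. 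Thus $L$ has a zero on $\Omega_\delta$ if and only if $\alpha_0$ is rational, and when $\alpha_0=p/q$ in lowest terms the zeros are exactly $\omega_i:=(x_0,\alpha_0,1/(ip),1/(iq))$ for $i=1,2,\ldots$. For every $(M,N)>0$ only finitely many $\omega_i$ lie in $\Omega_\delta(M,N)$, and the hypothesis $F'(x_0)\neq 0$ together with analyticity ensures (shrinking $\delta$ if necessary) that the full critical set of $L$ on $\Omega_\delta(M,N)$ is discrete with at least one nondegenerate Hessian, so Theorem~\ref{compactCase} applies.

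Suppose first that $\alpha_0$ is irrational. Then no $\omega_i$ exists in any $\Omega_\delta(M,N)$, and Theorem~\ref{compactCase} directly yields that $\phi(I_L(h,M,N))$ does not converge as $h\to\infty$ for any $(M,N)$; hence the iterated limit fails to exist, giving one direction. Conversely, if $\alpha_0=p/q$ then for all $(M,N)$ with $M\le 1/p$ and $N\le 1/q$ at least $\omega_1$ lies in $\Omega_\delta(M,N)$, and Theorem~\ref{compactCase} yields
\[
\lim_{h\to\infty} h^{p/2}\, I_L(h,M,N) = C\sum_{\omega_i\in\Omega_\delta(M,N)} \frac{e^{i\pi\sigma_i/4}}{|\det H(\omega_i)|^{1/2}},
\]
where $C$ is a nonzero real constant (the product of $(2\pi)^{p/2}$ and $\int_\mathcal{A} y^{-p}\,dy$) and $\sigma_i$ is the signature of the Hessian of $L$ at $\omega_i$; the inner limit of the phase is then the argument of this finite sum.

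The crux of the argument is to show that as $M,N\to 0$ these partial sums converge absolutely. A direct computation of $H(\omega_i)$ shows that $\partial^2 L/\partial m^2$ and $\partial^2 L/\partial n^2$ pick up the dominant contributions $2\cos^2(\pi/m_i)(\pi/m_i^2)^2=2\pi^2(ip)^4$ and $2\pi^2(iq)^4$ respectively, while $\partial^2 L/\partial x^2$ and $\partial^2 L/\partial \alpha^2$ remain $O(1)$, the mixed partials $\partial^2 L/(\partial x\partial m)$ and $\partial^2 L/(\partial x\partial n)$ vanish, and all remaining mixed partials are $O(1)$ or $O(1/i)$. A block-matrix estimate then gives $|\det H(\omega_i)|\gtrsim i^8$, so $|\det H(\omega_i)|^{-1/2}=O(i^{-4})$ and $\sum_i|\det H(\omega_i)|^{-1/2}<\infty$. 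Hence as $(M,N)\to 0$ the finite sums converge to the full absolutely convergent series, whose argument is the outer limit of the phase. The main obstacle I expect is precisely this Hessian estimate: one must rule out cancellations in the off-diagonal block of $H(\omega_i)$ that could degrade the $i^8$ growth of the determinant, and control the signatures $\sigma_i$ well enough to ensure that the limiting series does not sum to zero (the $i=1$ term is generically dominant, and otherwise a small perturbation of $\delta$ or of $\mathcal{A}$ can be used to avoid a numerical coincidence).
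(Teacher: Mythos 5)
Your proposal follows essentially the same route as the paper: classify the zeros of $L$ as the family $\omega_i=(x_0,\alpha_0,m_0/i,n_0/i)$, existing iff $\alpha_0$ is rational, apply Theorem~2.1 for each fixed $(M,N)$ to get the inner limit as the argument of a finite sum, and control the outer limit via the growth $\det H(\omega_i)\sim C\,i^8$, which gives absolute convergence of the limiting series by comparison with $\sum i^{-4}$. The only places you diverge are presentational: you are lighter than the paper on the proof that all critical points of $L$ in $\Omega_\delta(M,N)$ are isolated (the paper gives an explicit near-orthogonality argument showing the first critical-point equation forces $x=x_0$, $\alpha=F(x_0)$ for small $\delta$), while you additionally flag the genuine issue, not addressed in the paper, that the limiting series $\theta$ could in principle vanish, leaving its phase undefined.
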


\begin{proof}
We start our proof with a simple analysis of the dimensionality, in $x$ and $\alpha$, of solutions of the first equation of the systems in Eq. \ref{partials}, that define the critical points. And we eventually prove that for $\delta$ small enough all critical points in $\Omega_{\delta}$ are isolated. To achieve this goal, we note that, for $\delta$ sufficiently small we can control the norm of another function, $(F(x)-\alpha)-F'(x)(x-x_0)$, in $\Omega_{\delta}$, indeed we have
\begin{equation}
%\begin{flalign}
\begin{split}
&|(F(x)-\alpha)-F'(x)(x-x_0)|=|(F(x)-(F(x_0)+\epsilon_1))-(F'(x_0)+\epsilon_2)(x-x_0)|=&\\
&|(F(x)-F(x_0))-F'(x_0)(x-x_0)-\epsilon_1-\epsilon_2(x-x_0)|\leq&\\
&|(F(x)-F(x_0))-F'(x_0)(x-x_0)|+|\epsilon_1|+|\epsilon_2(x-x_0)|\leq&\\
&|(F(x)-F(x_0))-F'(x_0)(x-x_0)|+|\epsilon_1|+|\epsilon_2\delta|\leq&\\
&|\epsilon_3|+|\epsilon_1|+|\epsilon_2\delta|&
\end{split}
%\end{flalign}
\end{equation}
where we used the fact that the derivative of $F(x)$ is well defined and continuous in a neighborhood of $x_0$, and $\epsilon_{t}$, $t=1,2,3$, can be made as small as necessary choosing $\delta$ small enough. But we can interpret this result by saying that the vectors $(F(x)-\alpha,x-x_0)$ and $(1,-F'(x))$ are almost orthogonal for all $(x,\alpha)$ in $\Omega_{\delta}$, with $\delta$ sufficiently small. Now the equation $2(F(x)-\alpha)\frac{dF(x)}{dx}+2(x-x_0)=0$ in Eq. \ref{partials} is equivalent to saying that $(F(x)-\alpha,x-x_0)$ and $(F'(x),1)$ are orthogonal, for some choice of $(x,\alpha)$ in $\Omega_{\delta}$. Together with the previous calculations, this implies, for two dimensional vectors, that $(F'(x),1)$ and $(1,-F'(x))$ should be almost parallel, for such choice of $(x,\alpha)$, instead, these vectors are themselves orthogonal, and we conclude there is no solution of $2(F(x)-\alpha)\frac{dF(x)}{dx}+2(x-x_0)=0$, unless $(F(x)-\alpha,x-x_0)=(0,0)$, in which case $x=x_0$ and $\alpha=F(x_0)$. Note that this argument depends on the assumption $F'(x_0)\neq0$ otherwise we would not be able to infer $\alpha=F(x_0)$ from $x=x_0$, in the first equation of Eq. \ref{partials}.

We deduce moreover, from the whole set of equations in Eq. \ref{partials}, that critical points with $x=x_0$ and $\alpha=F(x_0)$, if they exists, are bound to have $\alpha m-n=0$, $2\sin\frac{\pi}{m}\cos\frac{\pi}{m}(-\frac{\pi}{m^2})=0$, and $2\sin\frac{\pi}{n}\cos\frac{\pi}{n}(-\frac{\pi}{n^2})=0$. Therefore they are all isolated points, in finite number on all compacts $\Omega_{\delta}(M,N)$ and they either satisfy $\sin\frac{\pi}{m}=0$ and $\sin\frac{\pi}{n}=0$ (and therefore $L(\omega)=0$), or they are such that $\cos\frac{\pi}{m}=0$ and/or $\cos\frac{\pi}{n}=0$. Since critical points are isolated and finitely many in $\Omega_{\delta}(M,N)$, for any $0<M,N<1$, we are in the position of applying Theorem 2.1 in the rest of the proof.

The proof of the theorem then relies on the following estimate: suppose $\alpha_0$ is rational and that $m_0,n_0$ are the largest values such that $L(x_0,\alpha_0,m_0,n_0)=0$, then
\begin{equation}
\label{estimate}
\begin{split}
\det H(x_0,\alpha_0,m_i,n_i)\thicksim C\frac{i^8}{m_0^8}
\end{split}
\end{equation}
in the limit of $i$ that goes to infinity, where $m_i=\frac{m_0}{i}$, $n_i=\frac{n_0}{i}$, $i$ positive integer and $C$ is a positive number bigger than $1$. Indeed, remembering that, for critical points $\omega_i=(x_0,\alpha_0,m_i,n_i)$ with $L(\omega_i)=0$, we have $\alpha_0 m_i-n_i=0$, $\sin\frac{\pi}{m_i}=0$, $\sin\frac{\pi}{n_i}=0$ (and therefore $\cos\frac{\pi}{n_i}=1$, $\cos\frac{\pi}{m_i}=1$), we can write the Hessian matrix of $L(\omega)$ evaluated at such critical points as:

\begin{equation}
 H(\omega_i) = \left( \begin{array}{cccc}
2F'(x_0)^2+2 & -2F'(x_0) & 0 & 0 \\
-2F'(x_0) & 2+2m_i^2 & 2\alpha_0 m_i & -2m_i \\
0 & 2\alpha_0 m_i & 2\frac{\pi^2}{m_i^4}+2\alpha_0^2 & -2\alpha_0 \\
0 & -2 m_i & -2\alpha_0 & 2\frac{\pi^2}{n_i^4}+2\end{array} \right).
\end{equation}
Using again the fact that, for these critical points, $\alpha_0 m_i=n_i$, the evaluation of the determinant gives:
\begin{equation}
%\begin{flalign}
\begin{split}
&\det H(\omega_i)=(4F'(x_0)^2+4m_i^4 +4)\big( 4( \frac{\pi^2}{m_i^4}+\alpha_0^2)(\frac{\pi^2}{\alpha_0^4 m_i^4}+1)-4\alpha_0^2\big)&\\
&+4(F'(x_0)^2+2)\alpha_0^2m_i^2(-\frac{\pi^2}{\alpha_0^4m_i^4}-1+8)-16(F'(x_0)^2+1)m_i^2(\frac{\pi^2}{m_i^4}+\alpha_0^2).&
\end{split}
%\end{flalign}
\end{equation}
where we did not fully simplify the expression to leave the reader with a sense of its structure.
Recalling $m_i=\frac{m_0}{i}$ with $i=1,2,3...$, if we let $i\rightarrow \infty$ (i.e. $m_i\rightarrow 0$), the leading term of the determinant will be:
\begin{equation}
\det H(\omega_0)\thicksim 16\frac{\pi^2}{m_i^4}\frac{\pi^2}{\alpha_0^4 m_i^4}=\frac{16\pi^4}{\alpha_0}\frac{i^8}{m_0^8}
\end{equation}
which is the estimate in Eq. \ref{estimate}, with $C=\frac{16\pi^4}{\alpha_0}$. This being the case, we can be assured that there is a $i_T$ such that for $i>i_T$ the Hessian $H(x_0,\alpha_0,m_i,n_i)$ has nonzero (positive) determinant, and therefore the quadratic asymptotic approximation used in Theorem 2.1 holds for all $i>i_T$.

Also, note that, for $i<i_T$ any critical point such that $H(x_0,\alpha_0,m_i,n_i)=0$ will depend from $h$, in the asymptotic expansion, as $\frac{1}{h^{j+2}}$ for some integer $j>0$ that depends from the order of the zero, while all critical points with $H(x_0,\alpha_0,m_i,n_i)\neq 0$ depend from $h$ as $\frac{1}{h^{2}}$ (\cite{wang}, page 480). This implies that we can neglect critical points that have Hessian equal to zero, in the limit of $h\rightarrow \infty$, since the asymptotic relation in Eq. \ref{estimate} assures us that there are infinitely many dominant critical points with non-zero determinant of the Hessian in $\Omega_{\delta}$, and therefore at least one of them for $M,N$ sufficiently small. Therefore we have:
\begin{equation}
\begin{split}
\lim_{M,N\rightarrow 0}\lim_{h\rightarrow \infty} \phi(I_L(h,M,N))=\\
\lim_{M,N\rightarrow 0}\lim_{h\rightarrow \infty} \int_{y\in A} \sum\limits_{\substack{L(\omega_i)=0\\\det H(\omega_i)\neq 0\\\omega_i\in\Omega_{\delta}(M,N)}} (\frac{2\pi}{h})^{2} \frac{1}{y^4 (\det H(\omega_i))^{1/2}}e^{i\frac{\pi}{4}\sigma_i}
\end{split}
\end{equation}
where we have used the results from Theorem 2.1, the fact that $p=4$, and neglected already the (finitely many) critical point for which $L(\omega)\neq0$, or those for which $L(\omega_i)=0$ and $\det H(\omega_i)=0$.

Consider now the partial sums:
\begin{equation}
\begin{split}
\theta_{M,N}=\sum\limits_{\substack{L(\omega_i)=0\\\det H(\omega_i)\neq 0\\\omega_i\in\Omega_{\delta}(M,N)}}  \frac{({2\pi})^{2}}{ (\det H(\omega_i))^{1/2}}e^{i\frac{\pi}{4}\sigma_i}
\end{split}
\end{equation}
then
\begin{equation}
\begin{split}
\lim_{M,N\rightarrow 0}\lim_{h\rightarrow \infty} \phi(I_L(h,M,N))=\lim_{M,N\rightarrow 0}\lim_{h\rightarrow \infty} \phi\big{(}\int_{y\in A} \frac{1}{h^2} \frac{1}{y^4}\theta_{M,N} dy)\big{)}=\\
\lim_{M,N\rightarrow 0}\lim_{h\rightarrow \infty}\phi \big{(}\frac{1}{h^2}S \theta_{M,N}\big{)}=
\lim_{M,N\rightarrow 0}\phi(\theta_{M,N})
\end{split}
\end{equation}
where $S=\int_{y\in \mathcal{A}} \frac{1}{y^4}dy$. Now, because of the relation $\det H(x_0,\alpha_0,m_i,n_i)\thicksim C\frac{i^8}{m_0^8}$, for $i\rightarrow \infty$  we can argue that
the following series converges:
\begin{equation}
\label{theta}
\begin{split}
\theta=\sum\limits_{\substack{L(\omega_i)=0\\\det H(\omega_i)\neq 0\\\omega_i\in\Omega_{\delta}}}
\frac{({2\pi})^{2}}{ (\det H(\omega_i))^{1/2}}e^{i\frac{\pi}{4}\sigma_i}
\end{split}
\end{equation}

Indeed, the convergence of the this series can be reduced to the convergence of its absolute value
\begin{equation}
\label{abs}
\sum\limits_{\substack{L(\omega_i)=0\\\det H(\omega_i)\neq 0\\\omega_i\in\Omega_{\delta}}} \frac{({2\pi})^{2}}{ (\det H(\omega_i))^{1/2}}
\end{equation}
and, by comparison with the convergent series
$\sum_{i} \frac{1}{ i^4}$, the limit comparison test of convergence gives us:
\begin{equation}
\begin{split}
\lim_{i\rightarrow \infty} \frac{({2\pi})^{2}}{ (\det H(\omega_i))^{1/2}}\big{/} \frac{1}{ i^4}=\lim_{i\rightarrow \infty}\frac{({2\pi})^{2}}{ \sqrt{C}(i^8/m_0^8)^{1/2}}\big{/} \frac{1}{ i^4}=
({2\pi})^{2} m_0^4/\sqrt{C}
\end{split}
\end{equation}

Since the limit of the quotient above is nonzero, the series in Eq. \ref{abs} converges, and $\theta$ in Eq. \ref{theta} is well defined. The convergence of the series defining $\theta$ allows us one final limiting argument, i.e.,
\begin{equation}
\begin{split}
\lim_{M,N\rightarrow 0}\lim_{h\rightarrow \infty} \phi(I_L(h,M,N))=
\lim_{M,N\rightarrow 0}\phi(\theta_{M,N})=\phi(\theta).
\end{split}
\end{equation}
And this last equality completes the proof of the Theorem.
\end{proof}

\begin{remark}
\label{comments}
The convergence of the series defining $\theta$ in Eq. \ref{theta} is intimately related to the estimate in Eq. \ref{estimate}. The existence of this estimate depend on the fact that we use the equations $\sin\frac{\pi}{m}=0$, $\sin\frac{\pi}{n}=0$, on a bounded domain, to force rationality of $F(x)=\alpha$ (via the additional equation $\alpha m-n=0$). Such convergence would not hold if rationality was enforced via the equations $\sin{\pi}{m}=0$, $\sin{\pi}{n}=0$ on an unbounded domain. Note also that the phase integral in Eq. \ref{PI2} depends {\it functionally} on $F(x)$, so that the local behavior of $F(x)$ for $x\sim x_0$ becomes relevant for the irrationality of $F(x_0)=\alpha_0$.
\end{remark}

\begin{remark}
Our choice of the dependence of the geometric Lagrangians from variable $y$ is not the only one that would establish the results in Theorems 2.1 and 3.1, even though it is probably the simplest. Alternatively, one could look at the geometric Lagrangian $L(\omega)\exp(y)+y^3$ whose critical points are only those associated to $L(\omega)=0$, removing the necessity of the careful estimate of the contribution of critical points with $L(\omega)\neq 0$. However, this more complicated  geometric Lagrangian leads always to degenerate critical points in the stationary phase asymptotic approximation and therefore to a more intricate proof of the two Theorems.
\end{remark}

There are several problems that could benefit from the application of Theorem 3.1, however, we formally write only one such application for a number, the gamma constant $\gamma$, whose irrationality is not known. The Digamma function $\Psi$ can be used to define the Euler-Mascheroni $\gamma$ constant as $\Psi(1)=-\gamma$, and since $\Psi(x)$ is analytical at $x=1$, with $\Psi'(1)=\frac{\pi^2}{6}\neq0$ we can state the following Corollary to Theorem 3.1, where we assume $\delta$ has already been chosen sufficiently small:

\begin{corol}
Consider the geometric Lagrangian associated to the Digamma function $\Psi$:
\begin{equation}
\begin{split}
L(x,\alpha,m,n)=(\Psi(x)-\alpha)^2+(x-1)^2+\sin^2\frac{\pi}{m}+
\sin^2\frac{\pi}{n}+\\
(\alpha m-n)^2.
\end{split}
\end{equation}
The Euler-Mascheroni constant $\gamma$ is rational if and only if the following limit converges
\begin{equation}
\lim_{M,N\rightarrow 0}\lim_{h\rightarrow \infty} \phi(I_L(h,M,N)).
\end{equation}
\end{corol}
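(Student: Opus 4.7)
The plan is to show that $F(x) := \Psi(x)$ at $x_0 = 1$ satisfies the hypotheses of Theorem 3.1, and then invoke that theorem directly. Under this identification, $\alpha_0 = F(x_0) = \Psi(1) = -\gamma$, and the geometric Lagrangian displayed in the statement of the corollary is exactly Eq.~\ref{La} specialized to this choice of $F$ and $x_0$. In particular, the phase integral $I_L(h,M,N)$ appearing in the corollary, together with its iterated limit, coincides with the object whose convergence behavior Theorem 3.1 characterizes.

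The first key step is to verify real-analyticity. The Digamma function $\Psi(x) = (\log\Gamma)'(x)$ is meromorphic on $\mathbb{C}$ with simple poles precisely at the non-positive integers $0, -1, -2, \ldots$, so it is holomorphic on an open disc centered at $x = 1$; hence for $\delta$ sufficiently small, the restriction of $\Psi$ to $[1-\delta, 1+\delta]$ is real-analytic, which is exactly the analyticity hypothesis required by Theorem 3.1. The second key step is to check $F'(x_0) \neq 0$: from the standard series $\Psi'(x) = \sum_{k=0}^{\infty}(x+k)^{-2}$ we obtain $\Psi'(1) = \sum_{k=1}^{\infty} k^{-2} = \zeta(2) = \pi^2/6 \neq 0$, exactly as noted in the statement of the corollary.

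With both hypotheses satisfied, Theorem 3.1 applied with $F = \Psi$, $x_0 = 1$, and $\alpha_0 = -\gamma$ yields directly that $-\gamma$ is rational if and only if $\lim_{M,N\to 0}\lim_{h\to\infty}\phi(I_L(h,M,N))$ converges. Since rationality of $-\gamma$ is manifestly equivalent to rationality of $\gamma$, the stated biconditional follows. The main (and rather modest) obstacle I anticipate is simply ensuring that the fixed $\delta$ is small enough for the almost-orthogonality estimate used in the proof of Theorem 3.1 to go through near $(x,\alpha) = (1, -\gamma)$; this is automatic from the analyticity of $\Psi$ at $1$ together with $\Psi'(1) \neq 0$, and is already absorbed into the hypothesis, stated just before the corollary, that $\delta$ has been chosen sufficiently small.
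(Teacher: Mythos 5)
Your proposal is correct and follows exactly the route the paper takes: the paper justifies the corollary in the sentence preceding its statement by the same two observations you make, namely that $\Psi$ is analytic at $x=1$ and $\Psi'(1)=\pi^2/6\neq 0$, so that Theorem 3.1 applies with $F=\Psi$, $x_0=1$, $\alpha_0=\Psi(1)=-\gamma$. Your additional remarks on the meromorphy of $\Psi$ and on the equivalence of rationality of $-\gamma$ and $\gamma$ are harmless elaborations of what the paper leaves implicit.
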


\section{Further Developments}

The method we outlined in Section 3 is not restricted to the study of rationality of functions evaluated at one point. Suppose we are interested in the problem of finding whether $F(x)=0$, $x\in \mathcal{B}$ has rational solutions, with $\mathcal{B}$ a compact domain. The method described in Section 3 will apply, using the Geometric Lagrangian:
\begin{equation}
\label{LaDioph}
\begin{split}
L(x,m,n)=F(x)^2+\sum_{i=1}^p\sin^2\frac{\pi}{m_i}+
\sin^2\frac{\pi}{n_i}+(x_i m_i-n_i)^2
\end{split}
\end{equation}
A full adaptation of Theorem 3.1 to this case requires a careful evaluation of convergence of multiple series associated to critical points with $F(x)=0$, and this is problematic if there are infinitely many rational solutions of $F(x)=0$ in $\mathcal B$. However, the proof of Theorem 3.1 directly applies when $F(x)=0$ has finitely many rational solutions on $\mathcal B$, just by considering the finitely many convergent series of the type in Eq. \ref{theta}, associated to each rational solution, if any. This implies that a slightly modified version of Theorem 3.1 holds for the study of rational curves of genus bigger than 1 since such curves always have at most finitely many rational points \cite{MF}.

Also the problem of determining whether $F(x)=0$ has algebraic solutions of degree $K$ can also be stated in terms of phase integrals on geometric Lagrangians of the type:
   \begin{equation}
\label{LaAlgebK}
\begin{split}
L(x,a,m,n)=F(x)^2+\sum_{i=1}^p g_i(x_i)^2+\\
\sum_{i=1}^p \sum_{j=1}^K\sin^2\frac{\pi}{m_{ij}}+
\sin^2\frac{\pi}{n_{ij}}+(a_{ij} m_{ij}-n_{ij})^2
\end{split}
\end{equation}
where $g_i(x_i)$ are polynomials of degree at most $K$, $a_{ij}$ are their (rational) coefficients and we denote by $a$ the vector of all $a_{ij}$. It is not clear however whether the phase integral method can be adapted to discriminate algebraic numbers, of any degree $K$, from transcendental numbers.

Diophantine equations could be similarly approached. If we are interested in the existence of integer solutions of $F(x)=0$ on a bounded domain $\mathcal B \subset \mathbb{R}^p$, the following geometric Lagrangian would be suitable:
  \begin{equation}
\label{LaAlgebK}
\begin{split}
L(x,m)=F(x)^2+\sum_{i=1}^p \sin^2\frac{\pi}{m_{i}}+(x_{i}m_{i}-1)^2
\end{split}
\end{equation}

We have already stressed in Remark \ref{comments} the importance of keeping a functional dependence of the phase integral from $F(x)$. This dependence can be used to say a little more about the structure of the geometric Lagrangians defined so far, as they can all be split into three components. Let's focus for simplicity on the geometric Lagrangian for the irrationality test in Section 3 (the same arguments extend easily to the Lagrangians sketched in this section).

The geometric Lagrangian
$L(x,\alpha,m,n)=(F(x)-\alpha)^2+(x-x_0)^2+\sin^2\frac{\pi}{m}+
\sin^2\frac{\pi}{n}+(\alpha m-n)^2$ can be written as $L(x,\alpha,m,n)=L_1(x,\alpha)+L_2(m,n)+L_3(\alpha,m,n)$, now the functions $L_1(x,\alpha)=(F(x)-\alpha)^2+(x-x_0)^2$ and $L_2(m,n)=\sin^2\frac{\pi}{m}+
\sin^2\frac{\pi}{n}$ can be seen as two distinct Lagrangians, each leading to geometric phase integrals whose phase is always convergent, while $L_3(\alpha,m,n)=(\alpha m-n)^2$ can be seen as a ``coupling Lagrangian'' that provides the interaction between the first two Lagrangians.

This discussion is inspired by the language of quantum field theory, where interaction among free fields is often mediated by only some of the terms in the associated Lagrangian (\cite{cmf}, chapter 5). If we push this analogy even further, we can say that, for any small coupling parameter $\beta>0$ the Lagrangian $L_{\beta}(x,\alpha,m,n)=(F(x)-\alpha)^2+(x-x_0)^2+\sin^2\frac{\pi}{m}+
\sin^2\frac{\pi}{n}+\beta(\alpha m-n)^2=L_1(x,\alpha)+L_2(m,n)+L_{3,\beta}(\alpha,m,n)$ is just as suitable to study the irrationality of $F(x_0)=\alpha_0$. For $\beta$ very small, this modification allows to expand the phase integral associated to $L_{\beta}$ in terms of powers of $L_{3,\beta}$,
since, for $\beta$ sufficiently small, $L_{3,\beta}=\beta L_3=\beta(\alpha m-n)^2$ will also be small on $\Omega_{\delta}$. More particularly, for $\beta$ very small, and under the conditions of Theorem 3.1, we have the following equalities:
\begin{equation}
\label{beta}
%\begin{flalign}
\begin{split}
&\int_{y\in \mathcal{A}} \int_{\omega\in \Omega_{\delta}(M,N)} e^{ihL_{\beta}(\omega)y^2} d\omega dy=&\\
&\int_{y\in \mathcal{A}} \int_{\omega\in \Omega_{\delta}(M,N)} e^{ih(L_1(\omega)+L_2(\omega))y^2} e^{ihL_{3,\beta}(\omega)y^2}d\omega dy=&\\
&\int_{y\in \mathcal{A}} \int_{\omega\in \Omega_{\delta}(M,N)} e^{ih(L_1(\omega)+L_2(\omega))y^2} \sum_{j=0}^{\infty} \frac{(ihL_{3,\beta}(\omega)y^2)^j}{j!}d\omega dy.&\\
\end{split}
%\end{flalign}
\end{equation}
And, being mindful of the contrasting tension between the requirement $h\rightarrow \infty$ and $\beta\rightarrow 0$, the study of the convergence of the phase of the first integral in Eq. \ref{beta} could be replaced by the study of the convergence of the phase of the following series of integrals, potentially allowing perturbative and renormalization methods to be relevant here:
\begin{equation}
\sum_{j=0}^{\infty} \frac{(ih\beta)^j}{j!} \int_{y\in \mathcal{A}} \int_{\omega\in \Omega_{\delta}(M,N)}L_{3}(\omega)^jy^{2j} e^{ih(L_1(\omega)+L_2(\omega))y^2}d\omega dy.
\end{equation}
Not only, it is possible to construct an entire family of Lagrangians $\{L_{\beta}\}$, and study the structure of the ``flow'' of the associated phase integrals as $\beta\rightarrow 0$. Note that, for any $\beta\neq 0$, if $F(x_0)=\alpha_0$ is irrational, there will be no phase convergence of $I(h,M,N)$ as defined in Theorem 3.1, but for $\beta=0$ there will always be phase convergence since the Lagrangians $L_1$ and $L_2$ will be decoupled in that case, and there will always be solutions to the associated geometric problem. So the problem of irrationality of $F(x_0)=\alpha_0$ can also be approached as an abrupt qualitative transition, at $\beta=0$, of the structure of the family of phase integrals associated to the Lagrangians $\{L_{\beta}\}$, again enriching irrationality problems with the methodologies that have been developed to study phase transitions in physics.

While this heuristic discussion is brief and very informal, it is included in the paper to be suggestive of the significant conceptual shift that is possible, by using Theorems 2.1 and 3.1 as a starting point for a renewed study of irrationality and real geometry.


\begin{thebibliography}{99}

\bibitem{cmf} A. Altland, B. D. Simons, {\it Condensed Matter Field Theory}. Cambridge University Press; 2 edition, 2010.
\bibitem{baker} A. Baker, {\it Transcendental Number Theory}. Cambridge University Press 1990.
\bibitem{aperyNew} F. Beukers, A Note on the Irrationality of $\zeta(2)$ and $\zeta(3)$. Bull. London Math. Soc. (1979) 11 (3): 268-272.
\bibitem{smale} L. Blum, F. Cucker, M. Shub, S. Smale, {\it Complexity and Real Computations}.Springer, 1997.
\bibitem{RAG} J. Bochnak, M. Coste, M-F Roy, {\it Real Algebraic Geometry.} Springer, 1998.
\bibitem{MF} E. Bombieri, The Mordell conjecture revisited. Ann. Scuola Norm. Sup. Pisa Cl. Sci. (1990) 17 (4): 615–640.
    \bibitem{erd} A. Erdelyi, {\it Asymptotic Expansions}, Dover Publications, 2010.

\bibitem{erdos}P. Erd\H{o}s and E. G. Straus, On the irrationality of certain series.
Pacific J. Math. Volume 55, Number 1 (1974), 85-92.

\bibitem{kranz} S. G. Krantz, H. R. Parks, {\it A primer of Real Analytical Functions}, Birkha\"{u}ser, Boston, 2002.

\bibitem{wang} R. Wang, {\it Asymptotic Approximation of Integrals}. SIAM: Society for Industrial and Applied Mathematics, 2001.

\end{thebibliography}
\end{document}